\newcommand{\klockan}{\the\hours:{\ifnum\minutes<10 0\fi}\the\minutes}
\newcommand{\tid}{\today\ \klockan}
\newcommand{\prtid}{\smash{\raise 10mm \hbox{\LaTeX ed \tid}}}
\renewcommand{\prtid}{}
\def\sectionmark#1{} 
\def\subsectionmark#1{}
\newcommand{\sectnr}{\ifnum \c@secnumdepth >\z@
                 \thesection.\hskip 1em\relax \fi}
\def\@evenhead{\footnotesize\rm\thepage\hfil\leftmark\hfil\llap{\prtid}}
\def\@oddhead{\footnotesize\rm\rlap{\prtid}\hfil\rightmark\hfil\thepage}
\def\tableofcontents{\section*{Contents} 
 \@starttoc{toc}}
\def\@biblabel#1{#1.}
\let\Thebibliography=\thebibliography
\renewcommand{\thebibliography}[1]{\def\@mkboth##1##2{}\Thebibliography{#1}
\addcontentsline{toc}{section}{References}
\frenchspacing 
\setlength{\@topsep}{0pt}
\setlength{\itemsep}{0pt}%
\setlength{\parskip}{0pt plus 2pt}%
}
\def\mdots@{\mathinner.\nonscript\!.%
 \ifx\next,.\else\ifx\next;.\else\ifx\next..\else
 \nonscript\!\mathinner.\fi\fi\fi}
\let\ldots\mdots@
\let\cdots\mdots@
\let\dotso\mdots@
\let\dotsb\mdots@
\let\dotsm\mdots@
\let\dotsc\mdots@
\def\vdots{\vbox{\baselineskip2.8\p@ \lineskiplimit\z@
    \kern6\p@\hbox{.}\hbox{.}\hbox{.}\kern3\p@}}
\def\ddots{\mathinner{\mkern1mu\raise8.6\p@\vbox{\kern7\p@\hbox{.}}%
    \raise5.8\p@\hbox{.}\raise3\p@\hbox{.}\mkern1mu}}
\let\Enumerate=\enumerate
\renewcommand{\enumerate}{\Enumerate%
\setlength{\@topsep}{0pt}
\setlength{\itemsep}{0pt}%
\setlength{\parskip}{0pt plus 1pt}%
\renewcommand{\theenumi}{\textup{(\alph{enumi})}}%
\renewcommand{\labelenumi}{\theenumi}%
}
\let\endEnumerate=\endenumerate
\renewcommand{\endenumerate}{\endEnumerate\unskip}
\def\@seccntformat#1{\csname the#1\endcsname.\quad}
\newcommand{\authortitle}[2]{\author{#1}\title{#2}\markboth{#1}{#2}}
\newcommand{\auth}[2]{{#1, #2.}}
\newcommand{\art}[6]{{\sc #1, \rm #2, \it #3\/ \bf #4 \rm (#5), \mbox{#6}.}}
\newcommand{\arttoappear}[3]{{\sc #1, \rm #2, to appear in \it #3}}
\newcommand{\book}[3]{{\sc #1, \it #2, \rm #3.}}
\newcommand{\AND}{{\rm and }}
\newcommand{\private}[2]{\sc #1, \rm \emph{Private communication}, #2.}
\newtheoremstyle{propositional}%
  {\topsep}   
  {\topsep}   
  {\itshape}  
  {}          
  {\bfseries} 
  {.}         
  { }         
  {}          
\theoremstyle{propositional}
\newtheorem{thm}{Theorem}[section]
\newtheorem{lem}[thm]{Lemma}
\newtheorem{cor}[thm]{Corollary}
\renewenvironment{proof}[1][\proofname]{\par
  \pushQED{\qed}%
  \normalfont
  \trivlist
  \item[\hskip\labelsep
        \itshape
    #1\@addpunct{.}]\ignorespaces
}{%
  \popQED\endtrivlist\@endpefalse
}
\newcommand{\setm}{\setminus}
\newcommand{\Cp}{C_p}
\DeclareMathOperator{\capp}{cap}
\newcommand{\cp}{\capp_p}
\newcommand{\CpU}{{C_p^U}}
\newcommand{\loc}{_{\rm loc}}
\newcommand{\ga}{\gamma}
\newcommand{\eps}{\varepsilon}
\newcommand{\la}{\lambda}
\newcommand{\p}{{$p\mspace{1mu}$}}
\newcommand{\R}{\mathbf{R}}
\newcommand{\limplus}{{\mathchoice{\vcenter{\hbox{$\scriptstyle +$}}}
  {\vcenter{\hbox{$\scriptstyle +$}}}
  {\vcenter{\hbox{$\scriptscriptstyle +$}}}
  {\vcenter{\hbox{$\scriptscriptstyle +$}}}
}}
\newcommand{\limminus}{{\mathchoice{\vcenter{\hbox{$\scriptstyle -$}}}
  {\vcenter{\hbox{$\scriptstyle -$}}}
  {\vcenter{\hbox{$\scriptscriptstyle -$}}}
  {\vcenter{\hbox{$\scriptscriptstyle -$}}}
}}
\newcommand{\limpm}{{\mathchoice{\vcenter{\hbox{$\scriptstyle \pm$}}}
  {\vcenter{\hbox{$\scriptstyle \pm$}}}
  {\vcenter{\hbox{$\scriptscriptstyle \pm$}}}
  {\vcenter{\hbox{$\scriptscriptstyle \pm$}}}
}}
\newcommand{\Np}{N^{1,p}}
\newcommand{\Nploc}{N^{1,p}\loc}
\DeclareMathOperator{\Mod}{Mod}
\newcommand{\Modp}{{\Mod_p}}
\newcommand{\Uplus}{U_\limplus}
\newcommand{\Uminus}{U_\limminus}
\newcommand{\Upm}{U_\limpm}
\newcommand{\Wp}{W^{1,p}}
\newcommand{\Wploc}{W^{1,p}\loc}
\newcommand{\Npploc}{N^{1,p}_{\textup{fine-loc}}}
\newcommand{\Lpploc}{L^{p}_{\textup{fine-loc}}}
\newcommand{\Npqloc}{N^{1,p}_{\textup{quasi-loc}}}
\newcommand{\hNpploc}{\widehat{N}^{1,p}_{\textup{fine-loc}}}
\newcommand{\eqv}{\ensuremath{
\mathchoice{\quad \Longleftrightarrow \quad}{\Leftrightarrow}
                {\Leftrightarrow}{\Leftrightarrow}} }
\newcommand{\imp}{\mathchoice{\quad \Longrightarrow \quad}{\Rightarrow}
                {\Rightarrow}{\Rightarrow}}
\numberwithin{equation}{section}
\newenvironment{ack}{\medskip{\it Acknowledgement.}}{}
\begin{document}

\authortitle{Anders Bj\"orn and Jana Bj\"orn}
{A uniqueness result for functions with zero fine gradient 
on quasiconnected  sets}
\title{A uniqueness result for functions with zero fine gradient 
on quasiconnected and finely connected sets}
\author{
Anders Bj\"orn \\
\it\small Department of Mathematics, Link\"oping University, \\
\it\small SE-581 83 Link\"oping, Sweden\/{\rm ;}
\it \small anders.bjorn@liu.se
\\
\\
Jana Bj\"orn \\
\it\small Department of Mathematics, Link\"oping University, \\
\it\small SE-581 83 Link\"oping, Sweden\/{\rm ;}
\it \small jana.bjorn@liu.se
}

\date{} 

\maketitle

\noindent{\small {\bf Abstract}. 
We show that every Sobolev function in $\Wploc(U)$  on a 
\p-quasiopen set $U \subset \R^n$ with a.e.-vanishing \p-fine gradient  
is a.e.-constant if and only if $U$ is \p-quasiconnected.
To prove this we use the theory of Newtonian Sobolev spaces
on metric measure spaces, and obtain the corresponding
equivalence
also for complete metric spaces equipped with a doubling
measure supporting a \p-Poincar\'e inequality.
On unweighted $\R^n$, we also obtain the corresponding 
result for  \p-finely open sets in terms of \p-fine connectedness, using
a deep result by Latvala. 

\bigskip
\noindent
{\small \emph{Key words and phrases}:
doubling metric measure space,
fine potential theory,
fine Sobolev space,
finely connected, 
Newtonian space, 
nonlinear potential theory,
Poincar\'e inequality,
quasiconnected,
zero fine gradient.
}

\medskip
\noindent
{\small Mathematics Subject Classification (2010):
Primary: 31C40; Secondary: 31C45, 31E05, 46E35.
}
}

\section{Introduction}

One of the basic properties of derivatives and gradients it that they control
the oscillation, so that every sufficiently nice function
with vanishing gradient in an open connected set must be constant therein.
This is used in many proofs and holds in rather general situations, such
as for distributions (H\"ormander~\cite[Theorem~3.1.4]{hormanderI})
and Sobolev functions, 
including those on weighted $\R^n$ with a \p-admissible measure
(Heinonen--Kilpel\"ainen--Martio~\cite[Lemma~1.16]{HeKiMa}).

In this note we address a similar question on quasiopen and finely open sets
in the context of the corresponding Sobolev spaces.
Such sets and spaces are fundamental in the fine potential theory
as well as for fine properties of 
solutions of
various
partial differential equations,
see e.g.\ Mal\'y--Ziemer~\cite{MZ} and the references therein.

We will be able to prove our results in the setting of metric 
spaces, under the usual assumptions, but this is not the primary goal 
of this note.
This general approach demonstrates the strength of the metric space 
theory, since the proofs below turn out to be very natural.
In particular, arguments using upper gradients and curve families
play a crucial role. 
We will also 
rely on several recent results
from the fine potential theory on metric spaces.

Let $1<p< \infty$.
A set $U \subset \R^n$ is \emph{\p-quasiopen} if for every $\eps>0$ there is 
an open set $G$ such that $G \cup U$ is open and 
$\Cp(G)<\eps$,
where $\Cp$ is the Sobolev \p-capacity associated with the Sobolev
space $\Wp(\R^n)$.
(This can equivalently be defined using various other related capacities.)
The study of Sobolev spaces and nonlinear partial differential equations
on \p-quasiopen sets was initiated by Kilpel\"ainen and Mal\'y in~\cite{KiMa92};
see the introductions in \cite{BBLat3} and \cite{BBLat2} for more on the 
history.

The \p-quasiopen sets are preserved  under taking 
finite intersections
and countable unions, but not under arbitrary unions.
(For example, since points in $\R^n$ have zero \p-capacity, 
$1 < p \le n$, and are 
thus \p-quasiopen,  every set is a union of \p-quasiopen sets, 
but not all sets are \p-quasiopen.)

Since the \p-quasiopen sets \emph{do not} form a topology,
it is not completely obvious how to define \p-quasiconnectedness,
and there seem to be at least two natural definitions.
Following Fuglede~\cite[p.\ 164]{Fugl71} we say
that a \p-quasiopen set $U$  is \emph{\p-quasiconnected} 
if the only subsets of $U$, that
are both \p-quasiopen and relatively
\p-quasiclosed (i.e.\ 
their complement within $U$ is also \p-quasiopen),
 are the sets with zero \p-capacity and their complements (within $U$).
This definition was also used by Adams--Lewis~\cite{AdLew} in
the nonlinear potential theory.
Equivalently, $U$ is  \p-quasiconnected
if it cannot  be written as a union of two 
disjoint \p-quasiopen sets with positive \p-capacity.

The \p-quasiopen sets are closely related to \p-finely open sets,
which are defined using the Wiener type integral \eqref{deff-thin} and
form 
the coarsest topology making all \p-superharmonic functions continuous;
called the \emph{\p-fine topology}.
More precisely, $U$ is \p-quasiopen if and only if it can 
be written as a union $U=V \cup E$, where $V$ is \p-finely open and 
$\Cp(E)=0$.
Another recent characterization of \p-quasiopen sets is
that they are precisely the \p-path open sets,
see Bj\"orn--Bj\"orn--Mal\'y~\cite[Theorem~1.1]{BBMaly} and 
Shanmugalingam~\cite[Remark~3.5]{Sh-rev}.

With this connection to \p-finely open sets in mind it seems
natural to say that a \p-quasiopen set $U$ is 
\emph{weakly \p-quasiconnected} if
it can be written as a union $U=V \cup E$, where $\Cp(E)=0$
and $V$ is a \p-finely connected \p-finely open set 
(i.e.\ connected in the \p-fine topology).
A consequence is that a \p-finely open set
is \p-finely connected if and only if it is weakly
\p-quasiconnected; 
the nontrivial ``if'' part follows from Lemma~\ref{lem-Lat-converse}.

The following is our main result.

\begin{thm} \label{thm-main}
Assume that $U \subset \R^n$ is a \p-quasiopen set in
unweighted\/ $\R^n$.
Then the following are equivalent\/\textup{:}
\begin{enumerate}
\item \label{a-gu}
If $u \in \Wploc(U)$ and $\nabla u =0$ a.e.,
then there is a constant $c$ such that $u=c$ a.e.\ in $U$.
\item \label{a-conn}
$U$ is \p-quasiconnected.
\item \label{a-weakconn}
$U$ is weakly \p-quasiconnected.
\item \label{a-finconn}
If $U=V \cup E$, where $V$ is \p-finely open and $\Cp(E)=0$,
then $V$ is \p-finely connected.
\end{enumerate}
\end{thm}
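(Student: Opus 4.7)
The plan is to establish the cycle (b) $\Rightarrow$ (d) $\Rightarrow$ (c) $\Rightarrow$ (b) among the three connectedness notions, and to pair it with an analytic equivalence (a) $\Leftrightarrow$ (b). The three connectedness conditions are essentially combinatorial equivalents. For (d) $\Rightarrow$ (c) one uses that every \p-quasiopen $U$ admits \emph{some} decomposition $U = V \cup E$ with $V$ \p-finely open and $\Cp(E) = 0$. For (b) $\Rightarrow$ (d), if $V$ splits as $V_1 \sqcup V_2$ into nonempty \p-finely open pieces, then $U = V_1 \sqcup (V_2 \cup E)$ is a partition into \p-quasiopen sets of positive \p-capacity, contradicting (b). For (c) $\Rightarrow$ (b), given a forbidden partition $U = U_1 \sqcup U_2$, I would intersect each piece with a \p-finely connected representative $V$; Lemma~\ref{lem-Lat-converse} then upgrades the resulting disconnection of $V$ modulo a null set to a genuine \p-fine disconnection of $V$, giving the required contradiction.

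The analytic implication (b) $\Rightarrow$ (a) goes via a level-set argument applied to the quasicontinuous representative of $u$. Assuming $u$ is not a.e.\ constant, I would pick a level $a \in \R$ for which both $\{u > a\}$ and $\{u < a\}$ carry positive \p-capacity while $\Cp(\{u = a\}) = 0$; such $a$ is available because only countably many levels of a \p-quasicontinuous function can carry positive \p-capacity. Then $\{u > a\}$ and $\{u \le a\} = \{u < a\} \cup \{u = a\}$ are both \p-quasiopen, disjoint, and cover $U$ up to a null-capacity set, contradicting \p-quasiconnectedness.

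For the converse (a) $\Rightarrow$ (b), the natural candidate counterexample is $u = \chi_{U_1}$ built from a forbidden partition $U = U_1 \sqcup U_2$, and the task is to verify $u \in \Wploc(U)$ with $\nabla u = 0$ a.e. The Newtonian and upper-gradient viewpoint is the right tool: since \p-quasiopen sets are exactly the \p-path open sets, along \p-almost every curve $\gamma$ in $U$ the preimages $\gamma^{-1}(U_i)$ are relatively open in the domain of $\gamma$, so $\chi_{U_1} \circ \gamma$ is locally constant. Hence the zero function is a \p-weak upper gradient for $\chi_{U_1}$ on $U$, placing $\chi_{U_1}$ in $\Nploc(U)$ with zero minimal \p-weak upper gradient, which on unweighted $\R^n$ translates into $\chi_{U_1} \in \Wploc(U)$ with $\nabla \chi_{U_1} = 0$ a.e. The main obstacle is this path-openness step, and it is precisely where the curves-and-upper-gradients machinery announced in the introduction comes into play; once it is in hand, the four conditions assemble into the stated equivalence.
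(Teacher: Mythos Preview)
Your architecture is close to the paper's, and your treatment of $\neg$\ref{a-conn} $\Rightarrow$ $\neg$\ref{a-gu} via \p-path openness of the pieces $U_i$ matches the paper exactly, as do \ref{a-conn} $\Rightarrow$ \ref{a-finconn} and \ref{a-finconn} $\Rightarrow$ \ref{a-weakconn}. There are, however, two genuine gaps.

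\medskip
\noindent\textbf{The implication \ref{a-weakconn} $\Rightarrow$ \ref{a-conn}.}
You write that Lemma~\ref{lem-Lat-converse} ``upgrades the resulting disconnection of $V$ modulo a null set to a genuine \p-fine disconnection of $V$''. This is the wrong direction. Lemma~\ref{lem-Lat-converse} says that if $V\setminus E$ is \p-finely connected then so is $V$; its contrapositive passes a disconnection of $V$ \emph{down} to $V\setminus E$, not up. What your argument actually requires is the converse statement: $V\setminus F$ disconnected $\Rightarrow$ $V$ disconnected, i.e.\ removing a set of zero \p-capacity from a \p-finely connected \p-finely open set preserves \p-fine connectedness. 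That is precisely Latvala's Theorem~\ref{thm-Latvala}, a deep result specific to unweighted $\R^n$, and the paper invokes it at exactly this point (in the form $\neg$\ref{a-finconn} $\Rightarrow$ $\neg$\ref{a-weakconn}). Without Latvala's theorem the cycle does not close; this is also why the paper can only prove \ref{a-conn} $\Rightarrow$ \ref{a-weakconn}, and not the converse, in the metric-space setting.

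\medskip
\noindent\textbf{The implication \ref{a-conn} $\Rightarrow$ \ref{a-gu}.}
Your level-set argument rests on the claim that ``only countably many levels of a \p-quasicontinuous function can carry positive \p-capacity''. This is false: for $u(x_1,\dots,x_n)=x_1$ on $\R^n$ with $1<p\le n$, every level set $\{u=a\}$ is a hyperplane of positive \p-capacity; already on $\R$ with $p>1$, $u(x)=x$ has every singleton as a level set, each of positive capacity. The hypothesis $\nabla u=0$ a.e.\ is essential here and you have not used it at this step. The paper's route is different: from $g_u=0$ a.e.\ one sees that $\Modp$-almost no curve joins $\Uplus=\{u>m\}$ to $\Uminus=\{u\le m\}$, hence zero is a \p-weak upper gradient of $\chi_{\Uplus}$, placing $\chi_{\Uplus}\in\Npploc(U)$ and making it \p-quasicontinuous. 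Then $\Uplus$ and $\Uminus$ arise as the super- and sub-level sets of $\chi_{\Uplus}$ at height $\tfrac12$, so both are \p-quasiopen for \emph{every} $m$, with no need to control $\Cp(\{u=m\})$.
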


Here, $\Wploc(U)$ is the local Sobolev space
defined by Kilpel\"ainen--Mal\'y~\cite{KiMa92} 
for \p-quasiopen sets $U$, and $\nabla u$ is 
the \p-fine gradient also introduced in \cite{KiMa92}.
On open sets, these notions coincide with the usual Sobolev space 
and weak (or distributional) gradient, respectively.

If $U$ is open and $u \in \Wploc(U)$ with $\nabla u =0$ a.e.,
then it follows from the \p-Poincar\'e inequality \eqref{eq-def-PI} that
$u$ is locally a.e.-constant, and hence 
necessarily a.e.-constant if and only if $U$ is connected,
i.e.\ \ref{a-gu} is equivalent to $U$ being connected in this case.
(One direction of this is \cite[Lemma~1.16]{HeKiMa}.)
Hence an \emph{open} set is connected if and only if it is \p-quasiconnected.
We thus recover Corollary~1 in Adams--Lewis~\cite{AdLew} for  
first-order Sobolev spaces;
which is the ``only if'' part of this equivalence, 
and which they obtain also for 
higher-order Sobolev spaces
in unweighted $\R^n$.
Similar facts are true also in metric spaces.

To prove Theorem~\ref{thm-main} we will use the theory of Newtonian Sobolev spaces
on metric measure spaces,
including several recent results in the fine potential theory on metric spaces.
In fact, in general metric spaces (assuming the rather standard
assumptions of completeness, doubling and a \p-Poincar\'e inequality),
we show that \ref{a-gu} $\eqv$ \ref{a-conn} $\eqv$ \ref{a-finconn}
$\imp$ \ref{a-weakconn}. (The statement \ref{a-gu} needs to be slightly 
reformulated, see Theorem~\ref{thm-main-X}.)
On the other hand, the implication \ref{a-weakconn} $\imp$ \ref{a-conn}
is equivalent to a statement about \p-fine connectedness, whose truth 
on unweighted $\R^n$ 
follows from a deep result by Latvala~\cite{Lat2000}.
We do not know whether Latvala's result can be generalized to metric spaces,
or even to weighted $\R^n$.

For \p-finely open sets, (part of) Theorem~\ref{thm-main} takes the following
form.

\begin{thm} \label{thm-main-fine}
Assume that $V \subset \R^n$ is a \p-finely open set in
unweighted\/ $\R^n$.
Then the following are equivalent\/\textup{:}
\begin{enumerate}
\item \label{b-gu}
If $u \in \Wploc(V)$ and $\nabla u =0$ a.e.,
then there is a constant $c$ such that $u=c$ a.e.\ in $V$.
\item \label{b-conn}
$V$ is \p-quasiconnected.
\item \label{b-finconn}
$V$ is \p-finely connected.
\end{enumerate}
\end{thm}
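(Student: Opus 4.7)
The plan is to derive Theorem~\ref{thm-main-fine} as a direct corollary of Theorem~\ref{thm-main}. Every \p-finely open set is \p-quasiopen (by the characterization $U=V\cup E$ with $V$ \p-finely open and $\Cp(E)=0$, applied with $E=\emptyset$), so Theorem~\ref{thm-main} applies to $V$. In particular, conditions \ref{b-gu} and \ref{b-conn} of Theorem~\ref{thm-main-fine} are literally the statements \ref{a-gu} and \ref{a-conn} of Theorem~\ref{thm-main}, so their equivalence is immediate and requires no new work.

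It therefore only remains to show that \ref{b-finconn} is equivalent to (say) condition \ref{a-finconn} of Theorem~\ref{thm-main} applied to $U=V$. The implication \ref{a-finconn} $\imp$ \ref{b-finconn} is trivial: since $V$ is itself \p-finely open and $\Cp(\emptyset)=0$, the decomposition $V=V\cup\emptyset$ is admissible in \ref{a-finconn}, and the conclusion is exactly \ref{b-finconn}.

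For the converse \ref{b-finconn} $\imp$ \ref{a-finconn}, assume $V$ is \p-finely connected and consider an arbitrary decomposition $V=W\cup E$ with $W$ \p-finely open and $\Cp(E)=0$. I would invoke Lemma~\ref{lem-Lat-converse}, which is precisely the tool cited in the introduction for the nontrivial direction of the equivalence ``\p-finely open $+$ \p-finely connected $\Leftrightarrow$ weakly \p-quasiconnected''; it propagates \p-fine connectedness from $V$ to $W$ under removal of a set of zero \p-capacity. This closes the chain and yields \ref{a-finconn}.

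The work, therefore, is not in this proof but is inherited from the two upstream ingredients: the four-way equivalence in Theorem~\ref{thm-main}, and Lemma~\ref{lem-Lat-converse} (which, as noted in the introduction, rests on Latvala's deep result on unweighted~$\R^n$). The only real obstacle one has to be careful about is the direction \ref{b-finconn} $\imp$ \ref{a-finconn}, because removing a \p-polar set from a \p-finely connected set could a priori disconnect it; this is precisely the subtlety Lemma~\ref{lem-Lat-converse} settles, so once that lemma is in hand the deduction of Theorem~\ref{thm-main-fine} is immediate.
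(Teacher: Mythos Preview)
Your overall strategy---deducing Theorem~\ref{thm-main-fine} from Theorem~\ref{thm-main} by noting that a \p-finely open $V$ is \p-quasiopen---is exactly right and matches the paper's intent (Theorem~\ref{thm-main-fine} is presented as a direct specialization of Theorem~\ref{thm-main}, with no separate proof).

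However, there is a genuine confusion in your argument for \ref{b-finconn} $\imp$ \ref{a-finconn}. You need: given $V$ \p-finely connected and a decomposition $V=W\cup E$ with $W$ \p-finely open and $\Cp(E)=0$, show that $W$ is \p-finely connected. Since $W\subset V$ and $\Cp(V\setm W)\le\Cp(E)=0$, this asks you to \emph{remove} a \p-polar set from a \p-finely connected set and retain connectedness. That is precisely Latvala's Theorem~\ref{thm-Latvala}, not Lemma~\ref{lem-Lat-converse}. Lemma~\ref{lem-Lat-converse} goes the other way: if $V\setm E$ is \p-finely connected then so is $V$; it propagates connectedness upward under \emph{adding} a \p-polar set. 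You have also inverted the dependency: Lemma~\ref{lem-Lat-converse} is the easy direction with a short self-contained proof in the paper, while Theorem~\ref{thm-Latvala} is the deep result. Your closing sentence, that ``removing a \p-polar set \dots\ is precisely the subtlety Lemma~\ref{lem-Lat-converse} settles'', is therefore backwards.

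The fix is simple: cite Theorem~\ref{thm-Latvala} at that step. Alternatively---and more in keeping with how the introduction frames it---link \ref{b-finconn} with condition~\ref{a-weakconn} of Theorem~\ref{thm-main} rather than with~\ref{a-finconn}. The decomposition $V=V\cup\emptyset$ gives \ref{b-finconn} $\imp$ \ref{a-weakconn} trivially, and \ref{a-weakconn} $\imp$ \ref{b-finconn} is exactly the equivalence noted in the introduction, for which Lemma~\ref{lem-Lat-converse} \emph{is} the correct tool (one concludes that $V$ is \p-finely connected from the connectedness of the \p-finely open part $V'$ of some decomposition $V=V'\cup E'$). Either route ultimately relies on Latvala's theorem, since it is already baked into the proof of \ref{a-weakconn} $\imp$ \ref{a-conn} in Theorem~\ref{thm-main}.
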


On metric spaces we know that 
\ref{b-gu} $\eqv$ \ref{b-conn} $\imp$ \ref{b-finconn},
but whether \ref{b-finconn} $\imp$ \ref{b-conn} remains an open
question.

\begin{ack}
This note has been
triggered by a question from 
Nicola Fusco~\cite{fusco-private}
on the validity of the implication 
\ref{a-weakconn} $\imp$ \ref{a-gu} in  Theorem~\ref{thm-main}.
The authors were supported by the Swedish Research Council, 
grants 2016-03424 and 621-2014-3974, respectively.
\end{ack}

\section{Preliminaries}

To keep this note short, we follow the notation from 
Bj\"orn--Bj\"orn--Latvala~\cite{BBLat3}, without repeating all
the discussion here; see 
\cite{BBLat3} for more references.
As usual, we assume that $1<p<\infty$
and that $X$ is a complete metric space equipped with a doubling measure
$\mu$ which supports a \p-Poincar\'e inequality, i.e\
there are constants $C,\la>0$ such that for all open  balls 
$B=B(x,r)$,
we have
\[
\mu(2B)\le C\mu(B)
\]
and, setting $u_{B}=\int_{B} u\,d\mu/\mu(B)$,
\begin{equation}    \label{eq-def-PI}
\frac{1}{\mu(B)} \int_{B} |u-u_B| \,d\mu
     \le C r \biggl( \frac{1}{\mu(B)} \int_{\la B} g^p 
               \,d\mu \biggr)^{1/p}
\end{equation}
holds for all integrable functions $u$ on $\la B:=B(x,\la r)$ and their
\p-weak upper gradients $g$. 
Here, $g: X \to [0,\infty]$ 
is a \emph{\p-weak upper gradient} of $u:X \to [-\infty,\infty]$ 
if for $\Modp$-almost every curve $\ga$ in $X$,
\begin{equation}  \label{eq-def-wug}
|u(x)-u(y)| \le \int_\ga g\,ds,
\end{equation}
where $x$ and $y$ are the end points of $\ga$ and the left-hand side
is interpreted as $\infty$ whenever at least one of the terms therein
is infinite. 
By ``holding for $\Modp$-almost every curve $\ga$'' we mean that there is 
$\rho\in L^p(X)$ such that $\int_\ga\rho\,ds=\infty$ for all $\ga$,
where \eqref{eq-def-wug} fails.
All curves considered here
are nonconstant, compact and rectifiable, and thus can
be parameterized by arc length $ds$.

Having defined the \p-weak upper gradients, the \emph{Newtonian Sobolev space}
$\Np (X)$ is defined as the collection of all $u\in L^p(X)$ 
having
a \p-weak upper gradient $g\in L^p(X)$.
Every $u \in \Np(X)$ 
has a \emph{minimal} \p-weak upper gradient
$g_u$ (well-defined up to sets of measure zero)
such that $g_u \le g$ for every \p-weak upper gradient $g \in L^p(X)$ of $u$.

$\Np(U)$ is defined similarly for 
arbitrary $U \subset X$, but in that case
\eqref{eq-def-wug} is only required for $\Modp$-almost
every curve $\ga$ within $U$. 
This is possible since \p-quasiopen sets are measurable, 
by Bj\"orn--Bj\"orn~\cite[Lemma~9.3]{BBnonopen}.
It was shown in  \cite[Proposition~3.5]{BBnonopen} 
that if $U$ is
\p-quasiopen then \p-weak upper gradients with respect to $U$ coincide
with those taken with respect to the whole space $X$.

Functions in $\Np (X)$ (and in $\Np(U)$ if $U$ is \p-quasiopen)
are precisely defined up to sets of zero \p-capacity,
which in turn is defined for an arbitrary $E\subset X$ as
\[
\Cp(E) = \inf_{u}  \int_X (|u|^p + g_u^p) \,d\mu,
\]
where the infimum is taken over all $u\in \Np(X)$ such that $u=1$ on $E$.

A set $V\subset X$ is \emph{\p-finely open} if $X\setm V$ is \p-thin at every 
$x\in V$, 
i.e.\  the Wiener type integral
\begin{equation}   \label{deff-thin}
\int_0^1\biggl(\frac{\cp(B(x,r)\setm V,B(x,2r))}
         {\cp(B(x,r),B(x,2r))}\biggr)^{1/(p-1)}  \frac{dr}{r}<\infty.
\end{equation}
Here,
the variational \p-capacity $\cp$ is  defined by
\[
\cp(E,A)=\inf_{u} \int_X g_u \, d\mu,
\]
with the infimum  taken over all $u\in \Np(X)$ such that 
$\chi_E \le u \le \chi_{A}$.

Since, under our assumptions, $\cp$ and $\Cp$ have the same zero sets,
it follows that 
\p-fine openness is preserved under removing sets of zero \p-capacity, as
such sets
do not influence the Wiener type integral~\eqref{deff-thin}.
This also shows that the complement of a set of zero \p-capacity is not
\p-thin at any  $x\in X$, so nonempty \p-finely open sets must have positive 
\p-capacity.

Adding sets of zero \p-capacity to \p-finely open sets 
does not necessarily preserve \p-fine openness, but it
produces \p-quasiopen sets:
By Theorem~1.4\,(a) in Bj\"orn--Bj\"orn--Latvala~\cite{BBLat2},
a set $U$ is \emph{\p-quasiopen} if and only if $U=V \cup E$, where
$V$ is \p-finely open and $\Cp(E)=0$. 
Typically, this decomposition is not unique.

The \p-finely open sets define the \p-fine topology, and a \p-finely open set
$U$ is \emph{\p-finely connected} if it is connected in this topology, 
i.e.\ it cannot be written as a disjoint union of nonempty \p-finely open sets.

For \p-quasiopen sets $U$, it is natural to define the \emph {local
Newtonian Sobolev space} $\Npploc(U)$, which consists of all functions 
$u:U\to[-\infty,\infty]$ such $u\in\Np(V)$ for every \p-finely open \p-strict 
subset $V\Subset U$.
Here, $V$ is a \emph{\p-strict subset} of $U$ if there is $u\in\Np(X)$ such
that $\chi_V\le u\le \chi_U$, or equivalently, if $\cp(V,U)<\infty$.
The space $\Lpploc(U)$ is defined similarly.

Every $u\in \Npploc(U)$ has a \emph{minimal} \p-weak upper gradient 
$g_u \in \Lpploc(U)$ 
(well-defined up to sets of measure zero) such that
$g_u \le g$ for every \p-weak upper gradient
$g \in \Lpploc(U)$ of $u$,
see Bj\"orn--Bj\"orn--Latvala~\cite[Section~5]{BBLat3}.
(The results in \cite{BBLat3} are for the even
larger space $\Npqloc(U)$, but can easily be adapted
to $\Npploc(U)$.)

For \p-quasiopen $U\subset\R^n$, the spaces $\Np(U)$ and $\Npploc(U)$
are essentially the Sobolev spaces $W^{1,p}(U)$ and $W^{1,p}\loc(U)$,
defined by Kilpel\"ainen--Mal\'y~\cite{KiMa92},
see the discussion after Corollary~\ref{cor-weakconn}
for more details.
(We remark that the space $\Npploc(U)$ is more natural in fine
potential theory than the smaller space $\Nploc(U)$ consisting of 
those functions $u$
such that for every $x \in U$ there is $r>0$ such that 
$u \in \Np(U \cap B(x,r))$.)

Similarly to $\Np(U)$, also functions in $\Npploc(U)$ are precisely 
defined up to sets of \p-capacity zero, as seen in the following lemma.

\begin{lem} \label{lem-qcont}
Let $U$ be \p-quasiopen and $u,v \in \Npploc(U)$.
If $u=v$ a.e.\ in $U$, then $u=v$ \p-q.e.\ in $U$,
i.e.\ $\Cp(\{x \in U : u(x) \ne v(x)\})=0$.
\end{lem}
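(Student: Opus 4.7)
\textit{Proof plan.}
The plan is to reduce to the corresponding uniqueness statement for $\Np$ on \p-finely open \p-strict subsets of $U$, and then exhaust $U$ by such subsets up to a set of \p-capacity zero.

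First, I would set $w:=u-v$; since Newtonian functions are finite \p-q.e., this is unambiguously defined \p-q.e.\ in $U$. Using that sums of \p-weak upper gradients are \p-weak upper gradients, it follows directly from the definition of $\Npploc$ that $w\in\Npploc(U)$ with $g_w\le g_u+g_v\in\Lpploc(U)$. By assumption $w=0$ a.e.\ in $U$, so it suffices to show $w=0$ \p-q.e.\ in $U$.

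Second, I would invoke the following standard uniqueness property of Newtonian functions on a \p-quasiopen (in particular \p-finely open) set $W$: if $w\in\Np(W)$ and $w=0$ a.e.\ in $W$, then $w=0$ \p-q.e.\ in $W$. This is the usual a.e.-to-q.e.\ rigidity of Newtonian functions, stemming from the fact that they are precise (quasicontinuous) pointwise representatives, so both $w$ and the zero function serve as such representatives of the same a.e.-equivalence class and must therefore agree off a set of zero \p-capacity.

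Third, I would exhaust $U$ by an increasing sequence of \p-finely open \p-strict subsets $W_k\Subset U$ with $\Cp(U\setm\bigcup_k W_k)=0$; such exhaustions underpin the very definition of $\Npploc$ and are available from \cite{BBLat3}. For each $k$ we have $w\in\Np(W_k)$ with $w=0$ a.e.\ in $W_k$, so the second step gives $w=0$ \p-q.e.\ in $W_k$. Taking the countable union and adding the capacity-zero remainder then yields $w=0$ \p-q.e.\ in $U$, i.e.\ $u=v$ \p-q.e.\ in $U$.

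The main obstacle will be locating the two supporting ingredients in the literature, namely (i) the a.e.-to-\p-q.e.\ uniqueness for $\Np$ on \p-quasiopen sets and (ii) the capacity-zero exhaustion of $U$ by \p-finely open \p-strict subsets. Once these are cited from \cite{BBLat3} (or its antecedents), the argument reduces to the bookkeeping above and requires no new estimates.
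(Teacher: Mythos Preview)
Your proposal is correct in outline, but it takes a longer route than the paper and, more importantly, the real content is hidden inside your step~(ii) rather than in the exhaustion.

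The paper argues directly on $U$: by \cite[Theorem~4.4]{BBLat3}, every function in $\Npploc(U)$ is \p-quasicontinuous on $U$ (with respect to both $\Cp$ and the intrinsic capacity $\CpU$); then \cite[Proposition~5.23]{BBbook}, applied to $U$ as a metric space in its own right, gives $u=v$ $\CpU$-q.e.; finally \cite[Proposition~4.2]{BBMaly} says $\Cp$ and $\CpU$ have the same zero sets, yielding $u=v$ \p-q.e. No exhaustion is needed.

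Your exhaustion reduces the $\Npploc(U)$ statement to the $\Np(W_k)$ statement on \p-finely open $W_k$, but your step~(ii) on each $W_k$ is not ``standard'' in the sense of following from the basic theory of $\Np$ on complete spaces: $W_k$ is merely \p-quasiopen, not open or complete, so the a.e.-to-q.e.\ rigidity there still requires quasicontinuity on \p-quasiopen sets (i.e.\ \cite[Theorem~4.4]{BBLat3}) together with the capacity transfer \cite[Proposition~4.2]{BBMaly} from $C_p^{W_k}$ to $\Cp$. Once you cite those, you have exactly the paper's argument on each $W_k$, and since \cite[Theorem~4.4]{BBLat3} already covers $\Npploc(U)$, the exhaustion is superfluous. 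The exhaustion with capacity-zero remainder does exist, so nothing is wrong; it just adds a layer without buying anything.
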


\begin{proof}
Theorem~4.4 in Bj\"orn--Bj\"orn--Latvala~\cite{BBLat3} shows that $u$ and $v$ 
are \p-quasi\-con\-tin\-u\-ous, both with respect to $C_p$ and $\CpU$,
where $\CpU$ is
obtained by regarding $U$ as a metric space in its own right.
Proposition~5.23 in Bj\"orn--Bj\"orn~\cite{BBbook} (applied to $U$) 
then implies that $u=v$ $\CpU$-q.e.,
and thus also \p-q.e., since $C_p$ and $\CpU$ have the same zero sets (by 
Bj\"orn--Bj\"orn--Mal\'y~\cite[Proposition~4.2]{BBMaly}).
\end{proof}

\section{Proofs}

To prove Theorem~\ref{thm-main}, we first 
obtain the following result in general metric spaces.

\begin{thm} \label{thm-main-X}
Let $U\subset X$ be a \p-quasiopen set.
Then the following are equivalent\/\textup{:}
\begin{enumerate}
\renewcommand{\theenumi}{\textup{(\roman{enumi})}}%
\item \label{i-gu}
If $u \in \Npploc(U)$ and $g_u=0$ a.e.,
then there is a constant $c$ such that $u=c$ a.e.\ in $U$.
\item \label{i-quasiconn}
$U$ is \p-quasiconnected.
\item \label{i-finconn}
If $U=V \cup E$, where $V$ is \p-finely open and $\Cp(E)=0$,
then $V$ is \p-finely connected.
\end{enumerate}
\end{thm}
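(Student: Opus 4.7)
\medskip\noindent\textbf{Proof proposal.}
My plan is to treat (ii)~$\Leftrightarrow$~(iii) combinatorially using the decomposition $U=V\cup E$ with $V$ \p-finely open and $\Cp(E)=0$ from Theorem~1.4\,(a) of~\cite{BBLat2}, and to treat (i)~$\Leftrightarrow$~(ii) by exploiting the \p-path open characterization of \p-quasiopen sets from \cite[Theorem~1.1]{BBMaly}, combined with the principle that a function in $\Npploc(U)$ with $g_u=0$ a.e.\ is constant along $\Modp$-a.e.\ curve in $U$.

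For (ii)~$\Leftrightarrow$~(iii), I argue by contraposition in both directions. If $U=U_1\cup U_2$ is a disjoint decomposition into \p-quasiopen sets of positive \p-capacity, I write $U_i=V_i\cup E_i$ and set $V=V_1\cup V_2$; then $U\setminus V\subset E_1\cup E_2$ has zero \p-capacity, while $V$ is the disjoint union of the nonempty \p-finely open sets $V_1$ and $V_2$, violating (iii). Conversely, if $V=V_1\cup V_2$ exhibits the failure of (iii), then $V_1\cup E$ and $V_2$ form a disjoint decomposition of $U$ into \p-quasiopen sets of positive \p-capacity (using that nonempty \p-finely open sets have positive \p-capacity, as noted in the excerpt), violating (ii).

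For (ii)~$\Rightarrow$~(i), given $u\in\Npploc(U)$ with $g_u=0$ a.e., I replace $u$ by its \p-quasicontinuous representative; then $0$ is a \p-weak upper gradient of $u$, so $u$ is constant along $\Modp$-a.e.\ curve in $U$. Consequently both $\{u>c\}$ and $\{u\le c\}$ are \p-path open, hence \p-quasiopen, for every $c\in\R$, and \p-quasiconnectedness of $U$ forces the dichotomy that one of them has zero \p-capacity. Setting
\[
c^*=\inf\{c\in\R:\Cp(\{u>c\})=0\},
\]
local \p-integrability of $u$ excludes $c^*=\pm\infty$, and a two-sided countable-union argument at $c^*$ gives $\Cp(\{u\ne c^*\})=0$, whence $u=c^*$ a.e. For (i)~$\Rightarrow$~(ii), I again contrapose: given a disjoint decomposition $U=U_1\cup U_2$ into \p-quasiopen sets of positive \p-capacity, the function $u=\chi_{U_1}$ is a witness to the failure of (i). On every \p-strict subset $V\Subset U$ one has $\mu(V)<\infty$ by the definition of \p-strictness, so $u\in L^p(V)$; and \p-path openness of both $U_i$ forces $\Modp$-a.e.\ curve in $V$ to lie entirely inside one of them by connectedness, so $u$ is constant along such curves and $0$ is a \p-weak upper gradient of $u$. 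Hence $u\in\Npploc(U)$ with $g_u=0$ a.e., and $u$ is not a.e.-constant since both $U_i$ have positive measure, nonempty \p-finely open sets being of positive measure by the Wiener integral~\eqref{deff-thin}.

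The delicate point I expect to need the most care is the threshold analysis at $c^*$ in (ii)~$\Rightarrow$~(i): ensuring $c^*\in\R$ relies on the local $L^p$-integrability built into the $\Npploc$ definition, and concluding $\Cp(\{u\ne c^*\})=0$ requires a two-sided argument, applying countable subadditivity of $\Cp$ from above along $c_n\downarrow c^*$ with $\Cp(\{u>c_n\})=0$, and from below along $c_n\uparrow c^*$ using the dichotomy $\Cp(\{u\le c_n\})=0$ for $c_n<c^*$.
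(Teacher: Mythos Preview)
Your proposal is correct and follows the paper's overall strategy, but with a couple of differences in execution worth noting.

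For (ii)~$\Leftrightarrow$~(iii) your argument matches the paper's almost verbatim (both of you tacitly assume $E\cap V=\emptyset$, which is harmless since one may replace $E$ by $E\setminus V$).

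For (i)~$\Leftrightarrow$~(ii), the paper proves \emph{both} directions by contraposition. In particular, for $\neg$(i)~$\Rightarrow$~$\neg$(ii) it simply picks a single level $m$ with $\Cp(\{u>m\})>0$ and $\Cp(\{u\le m\})>0$, then shows these two sets are \p-quasiopen; this completely bypasses the threshold analysis at $c^*$ that you flag as the delicate point. Moreover, to see that $U_\pm$ are \p-quasiopen the paper takes a slightly different route: it first observes that $v=\chi_{U_+}\in\Npploc(U)$ (since zero is a \p-weak upper gradient of $v$), then invokes \p-quasicontinuity of $v$ (Theorem~4.4 in~\cite{BBLat3}) together with~\cite[Proposition~3.4]{BBMaly} to conclude that the level sets of $v$, namely $U_\pm$, are \p-quasiopen. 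Your route---``$u$ is constant along $\Modp$-a.e.\ curve, hence $\{u>c\}$ and $\{u\le c\}$ are \p-path open, hence \p-quasiopen''---is equally valid and arguably more direct.

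Finally, for $\neg$(ii)~$\Rightarrow$~$\neg$(i) the paper does not argue that nonempty \p-finely open sets have positive measure; instead it deduces ``not a.e.-constant'' from ``not \p-q.e.-constant'' via Lemma~\ref{lem-qcont}. Your measure claim is true under the standing assumptions, but its justification ``by the Wiener integral'' is a bit thin; the paper's route via Lemma~\ref{lem-qcont} is cleaner.
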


Note that in \ref{i-finconn}, the \p-fine connectedness should hold
for every decomposition of $U$.
Latvala's result (Theorem~\ref{thm-Latvala} below) shows that
in unweighted $\R^n$, it can equivalently be assumed only for some decomposition
$U=V\cup E$; we do not know if this is true in metric spaces,
nor in weighted $\R^n$.

It follows from the proof below that  
the space $\Npploc(U)$ in \ref{i-gu} can be replaced by the smaller
space $\Np\loc(U)$, or even the smaller space consisting of those 
functions $u$ such $u \in \Np(U \cap B)$ for every ball $B\subset X$
(since $\chi_{U_1}\in\Np(U\cap B)$ in that case).
If $U$ is bounded, then 
the space $\Np(U)$ can be used instead, but
this is not possible in general as can be seen by considering 
$U=\{(x_1,x_2) \in \R^n : x_1 \ne 0\}$, which is not \p-quasiconnected
and yet 
every $u \in \Np(U)$ with   $g_u=0$ a.e. 
must be a.e.-constant.

In \ref{i-gu} it is equivalent to require that $u=c$ \p-q.e.\ in $U$
by Lemma~\ref{lem-qcont}, 
while  \ref{i-finconn} can equivalently be formulated as follows:
If $\Cp(E)=0$ and $U\setm E$ is \p-finely open, then $U \setm E$ is \p-finely
connected.

If a \p-weak upper gradient of $u$ is modified on a set of measure zero 
it remains a \p-weak upper gradient of $u$. 
In particular, the condition $g_u=0$ a.e.\ in \ref{i-gu} is
equivalent to requiring that zero is a \p-weak upper gradient of $u$.
We will use this fact in the proof below.

\begin{proof}
$\neg$\ref{i-quasiconn} $\imp$ $\neg$\ref{i-finconn}
By assumption, there is a \p-quasiopen set $U_1 \subset U$
such that $U_2=U \setm U_1$ is also \p-quasiopen and in addition
$\Cp(U_j)>0$, $j=1,2$.
We can write $U_j=V_j \cup E_j$, where $V_j$ is \p-finely open
and $\Cp(E_j)=0$.
Then $\Cp(V_j)= \Cp(U_j)>0$, and thus $V_j$ is nonempty, $j=1,2$.
Letting $V=V_1 \cup V_2$ and $E=E_1 \cup E_2$ shows that
\ref{i-finconn} fails.

$\neg$\ref{i-finconn} $\imp$ $\neg$\ref{i-quasiconn}
Let $U=V \cup E$, where $\Cp(E)=0$ and $V$ is a \p-finely open set
which is not \p-finely connected.
Then $V=V_1 \cup V_2$ for some nonempty disjoint \p-finely open sets
$V_1$ and $V_2$.
Since $V_2 \cup E$ is \p-quasiopen, $V_1=U\setm(V_2\cup E)$ is relatively
\p-quasiclosed within $U$, as well as \p-quasiopen.
As $\Cp(V_j)>0$, $j=1,2$, it follows that $U$ is not 
\p-quasiconnected.

$\neg$ \ref{i-gu} $\imp$ $\neg$\ref{i-quasiconn}
Let $u \in \Npploc(U)$ be a function with 
$g_u=0$ a.e.\ and assume that there is  $m \in \R$ such that
$\Cp(\Upm)>0$, where 
\[
\Uplus=\{x \in U : u(x)>m\} 
\quad \text{and} \quad \Uminus=\{x \in U : u(x)\le m\}.
\]
Since zero is a \p-weak upper gradient of $u$, there are $\Modp$-almost
no rectifiable curves starting  in $\Uplus$ and ending in $\Uminus$.
Hence zero is a \p-weak upper gradient of $v=\chi_{\Uplus}$ defined
on $U$, and thus $v \in \Npploc(U)$.

By Theorem~4.4 in Bj\"orn--Bj\"orn--Latvala~\cite{BBLat3}, 
$v$ is \p-quasicontinuous.
Hence the level sets 
\[
\bigl\{x \in U : v(x) > \tfrac{1}{2}\bigr\}=\Uplus
\quad \text{and} \quad 
\bigl \{x \in U : v(x) < \tfrac{1}{2}\bigr\}=\Uminus,
\]
which together constitute $U$,
are \p-quasiopen, by 
Bj\"orn--Bj\"orn--Mal\'y~\cite[Proposition~3.4]{BBMaly}.
Since $\Cp(\Upm)>0$, \ref{i-quasiconn} fails.

$\neg$\ref{i-quasiconn} $\imp$ $\neg$\ref{i-gu}
As $U$ is not \p-quasiconnected, it
can be written as a union of two disjoint \p-quasiopen sets
$U_1$ and $U_2$ with positive \p-capacity.
We shall show that 
the characteristic function $u=\chi_{U_1}$ has zero as a \p-weak upper
gradient (within $U$), and consequently belongs to $\Npploc(U)$.
Since it is not \p-q.e.-constant, 
and thus not a.e.-constant either (by Lemma~\ref{lem-qcont}), 
this violates \ref{i-gu}.

To see that zero is a \p-weak upper gradient of $u$, it suffices
to show that there are $\Modp$-almost no curves within $U$ passing
from $U_1$ to $U_2$. 
As $U_1$ and $U_2$ are \p-quasiopen, Remark~3.5 in 
Shanmugalingam~\cite{Sh-rev} implies that they are \p-path open (within $U$),
i.e.\ for $\Modp$-almost every curve $\ga:[0,l_\ga]\to U$, the preimages
$\ga^{-1}(U_j)$, $j=1,2$, are relatively open (and disjoint) in 
\[
[0,l_\ga] = \ga^{-1}(U_1)\cup \ga^{-1}(U_2).
\]
But this is impossible, so there are no such curves.
\end{proof}

The following direct consequence of the equivalence 
\ref{i-quasiconn} $\eqv$ \ref{i-finconn} in Theorem~\ref{thm-main-X}
motivates our terminology.

\begin{cor} \label{cor-weakconn}
Every \p-quasiopen \p-quasiconnected set $U \subset X$ is weakly 
\p-quasi\-con\-nec\-ted.
\end{cor}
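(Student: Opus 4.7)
The plan is to apply the equivalence \ref{i-quasiconn}~$\eqv$~\ref{i-finconn} from Theorem~\ref{thm-main-X} essentially verbatim. Since $U$ is \p-quasiopen, the characterization recalled in the Preliminaries (via Theorem~1.4\,(a) of \cite{BBLat2}) lets me fix a decomposition $U = V \cup E$ with $V$ \p-finely open and $\Cp(E) = 0$. The \p-quasiconnectedness of $U$ is hypothesis \ref{i-quasiconn} of Theorem~\ref{thm-main-X}, so I get the conclusion \ref{i-finconn}, which, applied to this particular decomposition, says precisely that $V$ is \p-finely connected.

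That is exactly the definition of weak \p-quasiconnectedness given in the Introduction, so the corollary follows immediately. Since the argument invokes no machinery beyond Theorem~\ref{thm-main-X} and the standard decomposition of \p-quasiopen sets, there is no real obstacle — the only thing to be careful about is to note that \ref{i-finconn} in Theorem~\ref{thm-main-X} is a statement about \emph{every} decomposition, so in particular applies to the one chosen here.
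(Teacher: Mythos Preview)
Your proposal is correct and matches the paper's approach: the paper simply records the corollary as a direct consequence of the equivalence \ref{i-quasiconn}~$\eqv$~\ref{i-finconn} in Theorem~\ref{thm-main-X}, and your argument just unpacks that implication by choosing one decomposition $U=V\cup E$ and reading off that $V$ must be \p-finely connected.
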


The following result about preserving \p-fine connectedness
was proved by Latvala~\cite{Lat2000}
in unweighted
$\R^n$, while it still remains open in more general situations 
(including $\R^n$ with \p-admissible weights). 

\begin{thm} \label{thm-Latvala}
\textup{(Latvala~\cite[Theorem~1.1]{Lat2000})}
Let $V\subset\R^n$ \textup{(}unweighted\/\textup{)}
be \p-finely open and \p-finely connected,
$1<p\le n$. 
If $\Cp(E)=0$ then $V\setm E$ is also \p-finely connected\/ 
\textup{(}and \p-finely open\/\textup{)}.
\end{thm}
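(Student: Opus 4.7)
The plan is to reduce Theorem~\ref{thm-Latvala} to a single fine-potential-theoretic statement at points of $E$. That $V\setm E$ is \p-finely open is immediate from the preliminaries: a zero-capacity set does not contribute to the Wiener type integral~\eqref{deff-thin}, so its removal cannot destroy \p-fine openness. For the \p-fine connectedness I argue by contradiction. Suppose $V\setm E = A_1 \cup A_2$, where $A_1, A_2$ are disjoint nonempty \p-finely open sets. The strategy is to split $E\cap V = E_1 \cup E_2$ and show that $V_j := A_j \cup E_j$ are disjoint nonempty \p-finely open sets whose union is $V$, contradicting the \p-fine connectedness of $V$.

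The natural assignment is to put $x\in E\cap V$ into $E_j$ exactly when $A_{3-j}$ is \p-thin at $x$, i.e.\ to attach $x$ to the piece of which it is a \p-fine limit point. Granting that this is well defined, the verification that $V_j$ is \p-finely open is short: at points of $A_j$ it is automatic, while at $x\in E_j$ the complement
\[
\R^n\setm V_j = (\R^n\setm V)\cup A_{3-j}\cup E_{3-j}
\]
is a finite union of sets that are \p-thin at $x$, namely $\R^n\setm V$ (because $V$ is \p-finely open at $x$), $A_{3-j}$ (by the choice of $E_j$), and $E_{3-j}\subset E$ (because every zero-capacity set is \p-thin at every point). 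Since a finite union of \p-thin sets is \p-thin (by subadditivity of $\cp$ and the elementary power inequality for the exponent $1/(p-1)$), $V_j$ is \p-finely open at $x$. Disjointness, nonemptiness, and $V=V_1\cup V_2$ are immediate.

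For the assignment to be well defined one needs that exactly one of $A_1, A_2$ is \p-thin at each $x\in E\cap V$. The ``at most one'' direction is elementary: if both were \p-thin at $x$, then, combining them with $\R^n\setm V$ (\p-thin at $x$ since $V$ is \p-finely open there) and with the zero-capacity set $E\cap V$ (which is \p-thin everywhere), we would obtain that the whole space $\R^n$ is \p-thin at $x$; but this is impossible since the numerator and denominator of the integrand in \eqref{deff-thin} then coincide, forcing the integral to diverge.

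The main obstacle is the ``at least one'' direction: at every $x\in E\cap V$ one of $A_1, A_2$ must be \p-thin at $x$. Equivalently, the indicator $\chi_{A_1}$, which carries zero as a \p-weak upper gradient on $V\setm E$ (by the same curve-family argument as in the proof of Theorem~\ref{thm-main-X}), must admit a \p-fine limit in $\{0,1\}$ at every point of the polar set $E\cap V$. This is a genuine removable-singularity statement for \p-finely continuous functions across polar sets, and it constitutes the entire content of the theorem. Latvala's argument relies on the fine-limit behaviour of \p-super\-harmonic functions on unweighted $\R^n$ with $1<p\le n$, combined with the quasi-Lindel\"of principle; since neither tool has an adequate counterpart on weighted $\R^n$ or in the metric space setting, this step is the reason the theorem is confined to the unweighted Euclidean case.
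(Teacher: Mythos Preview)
The paper does not prove this theorem; it is quoted from Latvala~\cite{Lat2000} as a known deep result, and the paper explicitly remarks that its validity beyond unweighted $\R^n$ remains open. So there is no ``paper's own proof'' to compare your proposal against.

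Your reduction is sound and well organized: the \p-fine openness of $V\setm E$ is indeed immediate, the splitting strategy $E\cap V=E_1\cup E_2$ according to which of $A_1,A_2$ is \p-thin at $x$ is the natural one, the ``at most one'' argument is correct (a finite union of \p-thin sets is \p-thin, and the whole space is never \p-thin at a point), and the verification that each $V_j$ is \p-finely open given the splitting is fine.

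But you have not actually proved the theorem. You yourself identify the ``at least one'' direction---that at every $x\in E\cap V$ at least one of $A_1,A_2$ is \p-thin---as ``the entire content of the theorem,'' and then, instead of proving it, you describe in broad terms what ingredients Latvala's argument uses. That is the gap: the one genuinely hard step is precisely the one you leave out. A proposal that says ``the crux is $X$, and $X$ is established by methods I will not reproduce'' is an outline or a commentary, not a proof. If your aim was only to locate the difficulty and explain why the result does not currently extend to weighted $\R^n$ or metric spaces, that is a reasonable expository contribution---and in fact goes further than the present paper, which simply cites the result---but it should not be presented as a proof of Theorem~\ref{thm-Latvala}.
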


(A similar statement with $p>n$ is trivial, since in that
case the \p-fine topology is just the usual Euclidean one.)
The converse implication is much easier
and holds in general metric spaces satisfying our assumptions:

\begin{lem} \label{lem-Lat-converse}
Let $V\subset X$ be \p-finely open and assume that $V\setm E$ is
\p-finely connected for some $E$ with $\Cp(E)=0$.
Then $V$ is also \p-finely connected.
\end{lem}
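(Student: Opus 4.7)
The plan is to argue by contradiction, exploiting two facts noted earlier in the paper: that removing a set of zero \p-capacity from a \p-finely open set preserves \p-fine openness, and that every nonempty \p-finely open set has positive \p-capacity.

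First, I would suppose that $V$ is not \p-finely connected, so that $V=V_1\cup V_2$ for two disjoint nonempty \p-finely open sets $V_1$ and $V_2$. Setting $W_j=V_j\setm E$ for $j=1,2$, I would observe that $W_1$ and $W_2$ are disjoint and their union equals $V\setm E$. The key structural observation is that each $W_j$ is itself \p-finely open: the excerpt explicitly records that \p-fine openness is preserved under removing sets of zero \p-capacity, since such sets do not influence the Wiener type integral \eqref{deff-thin}.

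Next, I would verify that neither $W_j$ is empty. Since $V_j$ is a nonempty \p-finely open set, the excerpt tells us $\Cp(V_j)>0$, and as $\Cp(E)=0$ the subadditivity of $\Cp$ gives $\Cp(W_j)=\Cp(V_j\setm E)\geq\Cp(V_j)>0$, so in particular $W_j\ne\emptyset$. Thus $V\setm E=W_1\cup W_2$ is a disjoint union of two nonempty \p-finely open sets, contradicting the assumed \p-fine connectedness of $V\setm E$. Hence $V$ must be \p-finely connected.

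There is essentially no obstacle here; the argument is a direct book-keeping exercise once one has at hand the two invariance properties (preservation of \p-fine openness under removal of polar sets, and positivity of \p-capacity for nonempty \p-finely open sets). The contrast with Latvala's Theorem \ref{thm-Latvala} is that there one must rule out that removing a polar set \emph{splits} a finely connected set, which is a genuinely deep matter; here we are only merging back a polar set, which cannot destroy a separation that did not already exist downstairs.
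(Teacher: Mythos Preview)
Your proof is correct and follows essentially the same route as the paper's: argue by contradiction, split $V=V_1\cup V_2$, remove $E$ from each piece, and use that nonempty \p-finely open sets have positive \p-capacity together with preservation of \p-fine openness under removal of zero-capacity sets to see that $V\setm E$ is disconnected. The paper's version is terser, but the logic is identical.
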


\begin{proof}
Assume  that $V$ is not \p-finely connected, i.e.\ it can be written as
$V=V_1\cup V_2$, where $V_1$ and $V_2$ are nonempty disjoint \p-finely open sets.
In particular, they have positive \p-capacity.
Then $V_j\setm E$ are also \p-finely open and nonempty, so 
$V\setm E= (V_1\setm E)\cup (V_2\setm E)$ cannot be 
\p-finely connected.
\end{proof}

As already mentioned, the
 Newtonian Sobolev space $\Np$ is more precisely defined than
the traditional Sobolev spaces $\Wp$ on $\R^n$.
For a \p-quasiopen set $U$ in unweighted $\R^n$, 
the space denoted $\Wploc(U)$ 
in Kilpel\"ainen--Mal\'y~\cite{KiMa92} coincides with the space
\[
        \hNpploc (U) = \{u : u=v \text{ a.e. for some } v \in \Npploc(U)\},
\]
see Bj\"orn--Bj\"orn--Latvala~\cite[Theorem~5.7]{BBLat3}.
Similarly, functions in $\Wp(U)$ are a.e.\ equal to functions
from $\Np(U)$.
Moreover, the modulus  of
the \p-fine gradient $\nabla u$, introduced in \cite{KiMa92},
coincides a.e.\ with the minimal \p-weak 
upper gradient $g_v$ of $v$, i.e.\ $g_v=|\nabla u|$ a.e., 
see \cite[Theorems~5.3 and~5.7]{BBLat3}.
The situation is similar in weighted $\R^n$ with a \p-admissible weight, 
provided that $\nabla u$ stands for the corresponding weighted \p-fine
gradient; cf.\  the discussion in 
Heinonen--Kilpel\"ainen--Martio~\cite[p.\ 13]{HeKiMa}.
Thus on unweighted and
weighted $\R^n$ (with a \p-admissible weight),
\ref{i-gu} in Theorem~\ref{thm-main-X}  
is equivalent to \ref{a-gu} in Theorem~\ref{thm-main}.
We are now ready to prove our main result.

\begin{proof}[Proof of Theorem~\ref{thm-main}]
\ref{a-gu} $\eqv$ \ref{a-conn} $\eqv$ \ref{a-finconn}
These equivalences follow from Theorem~\ref{thm-main-X},
in view of the discussion above.

\ref{a-conn} $\imp$ \ref{a-weakconn}
This follows from Corollary~\ref{cor-weakconn}.

$\neg$\ref{a-finconn} $\imp$ $\neg$\ref{a-weakconn}
By assumption, 
$U=V\cup E$, where $\Cp(E)=0$ and $V$  is \p-finely open but 
not \p-finely connected.
Let $U=V'\cup E'$ be any other decomposition of $U$ into a \p-finely open
set $V'$ and a set $E'$ with $\Cp(E')=0$.
Lemma~\ref{lem-Lat-converse}
then implies that $V'\setm (E\cup E')=V\setm (E\cup E')$ is not
\p-finely connected.
An application of Latvala's theorem~\ref{thm-Latvala} then shows that
$V'$ is not \p-finely connected either, i.e.\ \ref{a-weakconn} fails.
\end{proof}

\end{document}